\theoremstyle{plain}
\newtheorem{thm}{\protect\theoremname}
  \theoremstyle{plain}
  \newtheorem{lem}[thm]{\protect\lemmaname}
  \theoremstyle{plain}
  \newtheorem{prop}[thm]{\protect\propositionname}
  \providecommand{\lemmaname}{Lemma}
  \providecommand{\propositionname}{Proposition}
\providecommand{\theoremname}{Theorem}
\begin{document}
\global\long\def\R{\mathbb{R}}

\global\long\def\N{\mathbb{N}}

\global\long\def\C{\mathbb{C}}

\global\long\def\bo{\partial\Omega}

\global\long\def\phi{\varphi}

\global\long\def\div{{\rm div}}

\global\long\def\curl{{\rm curl}}

\global\long\def\supp{{\rm supp}}

\global\long\def\A{\mathcal{A}}

\global\long\def\dq{\rho}

\global\long\def\omin{K_{min}}

\global\long\def\omax{K_{max}}

\global\long\def\dpsi{\xi}

\title[Multi-frequency acousto-electromagnetic tomography]{Multi-frequency acousto-electromagnetic tomography}
\author{Giovanni S. Alberti}
\author{Habib Ammari}
\address{Department of Mathematics and Applications,  Ecole Normale Sup\'erieure, 45 Rue d'Ulm, 75005 Paris, France.} 
\email{giovanni.alberti@ens.fr}
\email{habib.ammari@ens.fr}
\author{Kaixi Ruan}
\address{Institute for Computational \&
Mathematical Engineering,  Stanford, California 94305-2215, USA.} 
\email{kaixi@stanford.edu}
\date{October 15, 2014}

\subjclass[2010]{35R30, 35B30}
\keywords{Acousto-electromagnetic tomography, multi-frequency measurements, optimal control, Landweber scheme, convergence, hybrid imaging}
\thanks{This work was supported by the ERC Advanced Grant Project
MULTIMOD--267184.}

\begin{abstract}
This paper focuses on the acousto-electromagnetic tomography, a recently
introduced hybrid imaging technique. In a previous work, the reconstruction
of the electric permittivity of the medium from internal data was
achieved under the Born approximation assumption. In this work, we
tackle the general problem by a Landweber iteration algorithm. The
convergence of such scheme is guaranteed with the use of a multiple
frequency approach, that ensures uniqueness and stability for the
corresponding linearized inverse problem. Numerical simulations are
presented.
\end{abstract}
\maketitle

\section{Introduction}

In hybrid imaging inverse problems, two different techniques are combined
to obtain high resolution and high contrast images. More precisely,
two types of waves are coupled simultaneously: one gives high resolution,
and the other one high contrast. Much research has been done in the
last decade to develop and study several new methods; the reader is
referred to \cite{alberti-capdeboscq-2015, ammari2011expansion, ammari2008introduction, bal2012_review, kuchment2011_review}
for a review on hybrid techniques. A typical combination is between
ultrasonic waves and a high contrast wave, such as light or microwaves.
The high resolution of ultrasounds can be used to perturb the medium,
thereby changing the electromagnetic properties, and cross-correlating
electromagnetic boundary measurements lead to internal data (see e.g.
\cite{cap2008,cap2011,2014-ammari-bossy-garnier-nguyen-seppecher, ammari:hal-00778971, bal-moscow-2014,bal-schotland-2014}).

This paper focuses on the technique introduced in \cite{ammari:hal-00778971},
the so called acousto-electromagnetic tomography. Spherical ultrasonic
waves are sent from sources around the domain under investigation.
The pressure variations create a displacement in the tissue, thereby
modifying the electrical properties. Microwave boundary measurements
are taken in the unperturbed and in the perturbed situation (see Figure~\ref{experience1}).
In a first step, the cross-correlation of all the boundary values,
after the inversion of a spherical mean Radon transform, gives the internal
data of the form
\[
|u_{\omega}(x)|^{2}\nabla q(x),
\]
where $q$ is the spatially varying electric permittivity of the body
$\Omega\subset\R^{d}$ for $d=2,3$, $\omega>0$ is the frequency
and $u_{\omega}$ satisfies the Helmholtz equation with Robin boundary conditions
\begin{equation}\label{eq:helmholtz}
 \left\{ \begin{array}{l}
\Delta u_{\omega}+\omega^{2}qu_{\omega}=0\text{ in }\Omega,\\
\frac{\partial u_{\omega}}{\partial\nu}-i\omega u_{\omega}=-i\omega\phi\text{ on }\partial\Omega.
\end{array}\right.
\end{equation}
(In fact, only the gradient part $\psi_\omega$ of $|u_{\omega}|^{2}\nabla q = \nabla\psi_\omega +\curl\Phi$ is measured.) The second step of this hybrid methodology consists
in recovering $q$ from the knowledge of $\psi_{\omega}$. In \cite{ammari:hal-00778971}
an algorithm based on the inverse Radon transform was considered,
but it works only under the Born approximation, namely under the assumption
that $q$ has small variations around a certain constant value $q_{0}$.

The purpose of this work is to discuss a reconstruction algorithm
valid for general values of $q$. Denoting the measured datum by $\psi_{\omega}^{*}$,
we propose to minimize the energy functional
\[
J_{\omega}(q)=\frac{1}{2}\int_{\Omega}|\psi_{\omega}(q)-\psi_{\omega}^{*}|^{2}dx
\]
with a gradient descent method. In this case, this is equivalent to
a Landweber iteration scheme. The convergence of such algorithm \cite{1995-hanke-neubauer-scherzer}
is guaranteed provided that $\left\Vert D\psi_{\omega}[q](\dq)\right\Vert \ge C\left\Vert \dq\right\Vert $.
This condition represents the uniqueness and stability for the linearized
inverse problem $D\psi_{\omega}[q](\dq)\mapsto\dq$. This problem
has been studied for certain classes of internal functionals in \cite{kuchment2012stabilizing}
by looking at the ellipticity of the associated pseudo-differential
operator. Using these techniques, stability up to a finite dimensional
kernel could be established. However, uniqueness is a harder issue
\cite{kuchment2014stabilizing}, and in general only generic injectivity
can be proved. Indeed, the kernel of $\dq\mapsto D\psi_{\omega}[q](\dq)$
may well be non-trivial.

In order to obtain an injective problem, we propose here to use a
multiple frequency approach. If the boundary condition $\phi$ is
suitably chosen (e.g. $\phi=1$), the kernels of the operators $\dq\mapsto D\psi_{\omega}[q](\dq)$
``move'' as $\omega$ changes, and by choosing a finite number of
frequencies $K$ in a fixed range, determined a priori, it is possible
to show that the intersection becomes empty. In particular, there
holds $\sum_{\omega\in K}\left\Vert D\psi_{\omega}[q](\dq)\right\Vert \ge C\left\Vert \dq\right\Vert $
and the convergence of an optimal control algorithm for the functional $J=\sum_{\omega\in K}J_{\omega}$
follows \cite{2014arXiv1403.5708A} (see Theorem~\ref{thm:main}). 

The reader is referred to \cite{2014-ammari-waters-zhang, bal-2014-contemporary,montalto-stefanov-2013, widlak-scherzer-2014}
and references therein for recent works on uniqueness and stability
results on inverse problems from internal data. The use of multiple
frequencies to enforce non-zero constraints in PDE, and to obtain
well-posedness for several hybrid problems, has been discussed in
\cite{2013InvPr..29k5007A,2013-albertigs,2014-albertigs,2014albertidphil,alberti-capdeboscq-2014,2014arXiv1403.5708A}.

This paper is structured as follows. Section~\ref{sec:Acousto-Electromagnetic-Tomograp}
describes the physical model and the proposed optimization approach.
In Section~\ref{sec:Convergence-of-the} we prove the convergence
of the multi-frequency Landweber scheme. Some numerical simulations
are discussed in Section~\ref{sec:Numerical-Results}. Finally, Section~\ref{sec:Conclusions}
is devoted to some concluding remarks.

\section{\label{sec:Acousto-Electromagnetic-Tomograp}Acousto-Electromagnetic
Tomography}

In this section we recall the coupled physics inverse problem introduced
in \cite{ammari:hal-00778971} and discuss the proposed Landweber scheme.

\subsection{\label{sub:Physical-model}Physical Model}

\begin{figure}
\centering \includegraphics[scale=0.35]{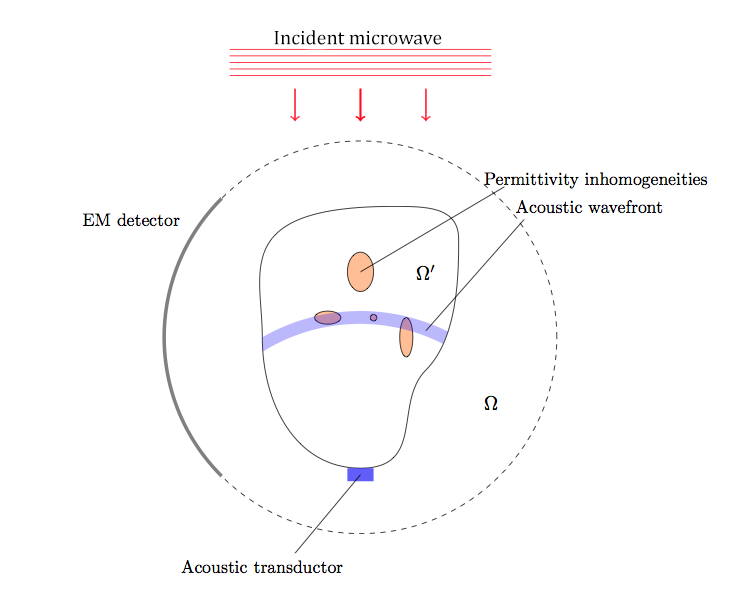} \caption{The acousto-electromagnetic tomography experiment.}

\label{experience1} 
\end{figure}

We now briefly describe how to measure the internal data in the hybrid
problem under consideration. The reader is referred to \cite{ammari:hal-00778971}
for full details.

Let $\Omega\subset\R^{d}$ be a bounded and smooth domain, for $d=2$
or $d=3$ and $q\in L^{\infty}(\Omega;\R)\cap H^{1}(\Omega;\R)$ be
the electric permittivity of the medium. We assume that $q$ is known
and constant near the boundary $\bo$, namely $q=1$ in $\Omega\setminus\Omega'$,
for some $\Omega'\Subset\Omega$. More precisely, suppose that $q\in Q$,
where for some $\Lambda>0$
\begin{equation}
Q:=\{q\in H^{1}(\Omega;\R):\Lambda^{-1}\le q\le\Lambda\text{ in \ensuremath{\Omega}, \ensuremath{\left\Vert q\right\Vert _{H^{1}(\Omega)}\le\Lambda}\ and \ensuremath{q=1\;}in \ensuremath{\Omega\setminus\Omega'}}\}.\label{eq:Q}
\end{equation}
In this paper, we model electromagnetic propagation in $\Omega$ at frequency
$\omega\in\A=[\omin,\omax]\subset\R_{+}$  by \eqref{eq:helmholtz}. The boundary value problem model allows us to consider arbitrary $q$
beyond the Born approximation, and so it is used here instead of the
free propagation model, which was originally considered in \cite{ammari:hal-00778971}.
Problem (\ref{eq:helmholtz}) admits a unique solution $u_{\omega}\in H^{1}(\Omega;\C)$
for a fixed boundary condition $\phi\in H^{1}(\Omega;\C)$ (see Lemma~\ref{lem:helmholtz full}).

Let us discuss how microwaves are combined with acoustic waves. A
short acoustic wave creates a displacement field $v$ in $\Omega$
(whose support is the blue area in Figure~\ref{experience1}), which
we suppose continuous and bijective. Then, the permittivity distribution
$q$ becomes $q_{v}$ defined by 
\[
q_{v}(x+v(x))=q(x),\qquad x\in\Omega,
\]
and the complex amplitude $u_{\omega}^{v}$ of the electric wave in
the perturbed medium satisfies 
\begin{equation}
\Delta u_{\omega}^{v}+\omega^{2}q_{v}u_{\omega}^{v}=0\text{ in }\Omega.\label{helmperturbed}
\end{equation}
Using (\ref{eq:helmholtz}) and (\ref{helmperturbed}), for $v$ small
enough we obtain the cross-correlation formula 
\[
\int_{\bo}(\frac{\partial u_{\omega}}{\partial n}\overline{u_{\omega}^{v}}-\frac{\partial\overline{v}}{\partial n}u_{\omega})\, d\sigma=\omega^{2}\int_{\Omega}(q_{v}-q)u_{\omega}\overline{u_{\omega}^{v}}\, dx\approx\omega^{2}\int_{\Omega}|u_{\omega}|^{2}\nabla q\cdot v\, dx,
\]
By boundary measurements, the left hand side of this equality is known.
Thus, we have measurements of the form
\[
\int_{\Omega}|u_{\omega}|^{2}\nabla q\cdot v\, dx,
\]
for all perturbations $v$. It is shown in \cite{ammari:hal-00778971,2014-ammari-bossy-garnier-nguyen-seppecher}
that choosing radial displacements $v$ allows to recover the gradient
part of $|u_{\omega}|^{2}\nabla q$ by using the inversion for the
spherical mean Radon transform. Namely, writing the Helmholtz decomposition
of $|u_{\omega}|^{2}\nabla q$
\[
|u_{\omega}|^{2}\nabla q=\nabla\psi_{\omega}+\curl\Phi_{\omega},
\]
for $\psi_{\omega}\in H^{1}(\Omega;\R)$ and $\Phi_{\omega}\in H^{1}(\Omega;\R^{2d-3})$,
the potential $\psi_{\omega}$ can be measured. Moreover, $\psi_{\omega}$
is the unique solution to \cite[Chapter I, Theorem 3.2 and Corollary 3.4]{GIRAULT-RAVIART-1986}
\begin{equation}
\left\{ \begin{array}{l}
\Delta\psi_{\omega}=\div(|u_{\omega}|^{2}\nabla q)\qquad\text{in }\Omega,\\
\frac{\partial\psi_{\omega}}{\partial\nu}=0\qquad\text{on }\bo,\\
\int_{\Omega}\psi_{\omega}\, dx=0.
\end{array}\right.\label{eq:psi_omega}
\end{equation}

In this paper, we assume that the inversion of the spherical mean Radon transform
has been performed and that we have access to $\psi_{\omega}$. In
the following, we shall deal with the second step of this hybrid imaging
problem: recovering the map $q$ from the knowledge of $\psi_{\omega}$.

\subsection{The Landweber Iteration}\label{sub:landweber}

Let $q^{*}$ be the real permittivity with corresponding measurements
$\psi_{\omega}^{*}$. Let $K\subset\A$ be a finite set of admissible
frequencies for which we have the measurements $\psi_{\omega}^{*}$,
$\omega\in K$. The set $K$ will be determined later. Let us denote
the error map by
\begin{equation}
F_{\omega}\colon Q\to H_{\nu}^{1}(\Omega;\R),\qquad q\mapsto\psi_{\omega}(q)-\psi_{\omega}^{*},\label{eq:map F}
\end{equation}
where $\psi_{\omega}(q)$ is the unique solution to (\ref{eq:psi_omega}),
and $H_{\nu}^{1}(\Omega;\R)=\{u\in H^{1}(\Omega;\R):\frac{\partial u}{\partial\nu}=0\text{ on \ensuremath{\bo}}\}$.

A natural approach to recover the real conductivity is to minimize
the discrepancy functional $J$ defined as 
\begin{equation}
J(q)=\frac{1}{2}\sum_{\omega\in K}\int_{\Omega}|F_{\omega}(q)|^{2}dx,\qquad q\in Q.\label{J}
\end{equation}
The gradient descent method can be employed to minimize $J$. At each
iteration we compute 
\[
q_{n+1}=T\left(q_{n}-hDJ[q_{n}]\right),
\]
where $h>0$ is the step size and $T\colon H^{1}(\Omega;\R)\to Q$
is the Hilbert projection onto the convex closed set $Q$, which guarantees
that at each iteration $q_{n}$ belongs to the admissible set $Q$.
Since $DJ[q]=\sum_{\omega}DF_{\omega}[q]^{*}(F_{\omega}(q))$, this
algorithm is equivalent to the Landweber scheme \cite{1995-hanke-neubauer-scherzer}
given by 
\begin{equation}
q_{n+1}=T\Bigl(q_{n}-h\sum_{\omega\in K}DF_{\omega}(q_{n})^{*}(F_{\omega}(q_{n}))\Bigr).\label{algoLandweber}
\end{equation}
(For the Fr\'echet differentiability of the map $F_{\omega}$, see Lemma~\ref{lem:frechet differentiability}.)

The main result of this paper states that the Landweber scheme defined
above converges to the real unknown $q^{*}$, provided that $K$ is
suitably chosen and that $h$ and $\left\Vert q_{0}-q^{*}\right\Vert _{H^{1}(\Omega)}$
are small enough. The most natural choice for the set of frequencies
$K$ is as a uniform sample of $\A$, namely let
\[
K^{(m)}=\{\omega_{1}^{(m)},\dots,\omega_{m}^{(m)}\},\qquad\omega_{i}^{(m)}=\omin+\frac{(i-1)}{(m-1)}(\omax-\omin).
\]

\begin{thm}
\label{thm:main}Set $\phi=1$. There exist $C>0$ and $m\in\N^{*}$
depending only on $\Omega$, $\Lambda$ and $\A$ such that for any
$q\in Q$ and $\dq\in H_{\nu}^{1}(\Omega;\R)$ 
\begin{equation}
\sum_{\omega\in K^{(m)}}\left\Vert DF_{\omega}[q](\dq)\right\Vert {}_{H^{1}(\Omega;\R)}d\omega\ge C\left\Vert \dq\right\Vert {}_{H^{1}(\Omega;\R)}.\label{eq:Landweberinequality}
\end{equation}
As a consequence, the sequence defined in (\ref{algoLandweber}) converges
to $q^{*}$ provided that $h$ and $\left\Vert q_{0}-q^{*}\right\Vert _{H^{1}(\Omega)}$
are small enough.
\end{thm}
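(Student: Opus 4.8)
The plan is to reduce \eqref{eq:Landweberinequality} to an injectivity (kernel-triviality) statement and to prove the latter by exploiting the analytic dependence of the solutions on the frequency. First I would linearise. Writing $w_\omega=Du_\omega[q](\dq)$ and differentiating \eqref{eq:helmholtz}, the field $w_\omega$ solves
\[
\Delta w_\omega+\omega^2 q\,w_\omega=-\omega^2\dq\,u_\omega\ \text{in }\Omega,\qquad \tfrac{\partial w_\omega}{\partial\nu}-i\omega w_\omega=0\ \text{on }\bo,
\]
and differentiating \eqref{eq:psi_omega} shows that $\dpsi_\omega:=DF_\omega[q](\dq)$ is the mean-zero solution of a Neumann problem with weak formulation
\[
\int_\Omega\nabla\dpsi_\omega\cdot\nabla v\,dx=\int_\Omega\bigl(|u_\omega|^2\nabla\dq+2\,\mathrm{Re}(\overline{u_\omega}\,w_\omega)\nabla q\bigr)\cdot\nabla v\,dx,\qquad v\in H^1(\Omega).
\]
The first source term is the principal part, while the second is of lower order: since $w_\omega$ solves an elliptic equation with right-hand side $-\omega^2\dq\,u_\omega\in L^2$, the map $\dq\mapsto w_\omega$ is bounded $L^2\to H^2$, so $\dq\mapsto 2\,\mathrm{Re}(\overline{u_\omega}w_\omega)\nabla q$ factors through the compact embedding $H^1\hookrightarrow L^2$ and is a compact perturbation.

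I would prove the inequality by contradiction, letting $m\to\infty$ so that the sums over $K^{(m)}$, weighted by $d\omega=(\omax-\omin)/(m-1)$, become Riemann sums for $\int_\A\,\cdot\,d\omega$. If \eqref{eq:Landweberinequality} failed for every $m$, one would obtain $q_m\in Q$ and $\dq_m\in H_\nu^1(\Omega;\R)$ with $\|\dq_m\|_{H^1}=1$ and $\sum_{\omega\in K^{(m)}}\|DF_\omega[q_m](\dq_m)\|_{H^1}\,d\omega\to0$. Passing to subsequences, $q_m\rightharpoonup q$ and $\dq_m\rightharpoonup\dq$ in $H^1$, with strong $L^2$ convergence by Rellich, and $q\in Q$. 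Using that $q_m\to q$ in every $L^p$ (uniform $L^\infty$ bound on $Q$), the solution maps converge uniformly for $\omega\in\A$: $u_\omega[q_m]\to u_\omega[q]$ in $H^2\hookrightarrow C(\overline\Omega)$ and $w_\omega^m\to w_\omega$ in $H^2$, whence $DF_\omega[q_m](\dq_m)\rightharpoonup DF_\omega[q](\dq)$ weakly in $H^1$ for each $\omega$. Since $\omega\mapsto\|DF_\omega[q_m](\dq_m)\|_{H^1}$ is equi-Lipschitz in $\omega$, the Riemann sums and the integrals $\int_\A\|DF_\omega[q_m](\dq_m)\|_{H^1}\,d\omega$ share the same limit, so weak lower semicontinuity and Fatou give $\int_\A\|DF_\omega[q](\dq)\|_{H^1}\,d\omega\le\liminf_m\sum_{\omega\in K^{(m)}}\|DF_\omega[q_m](\dq_m)\|_{H^1}\,d\omega=0$. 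Hence $DF_\omega[q](\dq)=0$ for every $\omega\in\A$.

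The heart of the matter — and the step I expect to be the main obstacle — is to deduce $\dq=0$ from the vanishing of $DF_\omega[q](\dq)$ on the whole band $\A$; a single frequency need not give an injective operator, since $|u_\omega|^2$ may vanish. Here the multiple-frequency idea enters through analyticity: because the Robin condition is dissipative, \eqref{eq:helmholtz} is well posed for all $\omega>0$, so $\omega\mapsto u_\omega[q]$ is real-analytic on $(0,\infty)$, and consequently so is the $H_\nu^1$-valued map $\omega\mapsto DF_\omega[q](\dq)$. Vanishing on the interval $\A$ thus forces vanishing on all of $(0,\infty)$ by the identity theorem, and it suffices to exhibit one frequency at which $DF_\omega[q]$ is injective. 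I would take $\omega\to0^+$: with the choice $\phi=1$ a short expansion gives $u_\omega[q]\to1$ uniformly, so $|u_\omega|^2\ge\tfrac12$ for small $\omega$, while $w_\omega=O(\omega^2)\dq$ renders the lower-order term negligible. Testing the weak formulation with $v=\dq$ (which vanishes near $\bo$, as $q\equiv1$ there) and using Poincar\'e's inequality yields $\|\nabla\dq\|_{L^2}\le C\omega^2\|\dq\|_{L^2}\le C'\omega^2\|\nabla\dq\|_{L^2}$, whence $\dq=0$ for $\omega$ small. This is precisely where $\phi=1$ is used: it guarantees the non-degeneracy $u_0\equiv1\neq0$ of the zero-frequency limit.

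It remains to upgrade $\dq_m\rightharpoonup0$ to strong $H^1$ convergence so as to contradict $\|\dq_m\|_{H^1}=1$. For this I would integrate the weak formulation over $\omega\in\A$ and test with $v=\dq_m$, obtaining
\[
\int_\Omega G_m\,|\nabla\dq_m|^2\,dx=o(1)-\int_\A\!\!\int_\Omega 2\,\mathrm{Re}(\overline{u_\omega}\,w_\omega^m)\nabla q_m\cdot\nabla\dq_m\,dx\,d\omega,\qquad G_m:=\int_\A|u_\omega[q_m]|^2\,d\omega,
\]
where the $o(1)$ comes from $\int_\A\|DF_\omega[q_m](\dq_m)\|_{H^1}\,d\omega\to0$ and $\|\nabla\dq_m\|_{L^2}\le1$. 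The limit $G(x)=\int_\A|u_\omega[q](x)|^2\,d\omega$ is continuous and strictly positive (again by analyticity, $\omega\mapsto u_\omega[q](x)$ cannot vanish on all of $\A$ as $u_\omega[q]\to1$), so $G_m\ge c>0$ uniformly for large $m$, while the remaining integral is bounded by $C\|\dq_m\|_{L^2}\|\nabla\dq_m\|_{L^2}\to0$ since $\dq_m\to0$ in $L^2$. Thus $\|\nabla\dq_m\|_{L^2}\to0$, contradicting $\|\dq_m\|_{H^1}=1$ and establishing \eqref{eq:Landweberinequality}; uniformity of $C$ and $m$ over $Q$ is automatic, all sequences living in $Q$. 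Finally, convergence of \eqref{algoLandweber} to $q^*$ follows from \eqref{eq:Landweberinequality} by the abstract optimal-control/Landweber result of \cite{2014arXiv1403.5708A}, provided $h$ and $\|q_0-q^*\|_{H^1}$ are small enough.
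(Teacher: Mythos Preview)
Your approach is correct in spirit but genuinely different from the paper's. The paper proceeds \emph{directly}: it first isolates a quantitative unique-continuation lemma for Banach-space-valued holomorphic maps (if $g$ is holomorphic and bounded on $B(0,\omax)$ with $\|g(0)\|\ge C_0$, then $\|g(\omega)\|\ge C$ for some $\omega\in\A$, with $C$ depending only on the data). It applies this to $\omega\mapsto g_\dq(\omega):=\div\bigl(u_\omega\overline{u_{\overline\omega}}\nabla\dq+(\overline{u_{\overline\omega}}v_\omega+u_\omega\overline{v_{\overline\omega}})\nabla q\bigr)\in (H^1_\nu)'$, using $u_0=1$, $v_0=0$ (from $\phi=1$) to get $\|g_\dq(0)\|\ge C_0$. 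This yields a good frequency $\omega_\dq\in\A$; a uniform Lipschitz bound on $\omega\mapsto g_\dq(\omega)$ (via the bound on $\partial_\omega u_\omega$) then extends the estimate to an interval $[\omega_\dq-Z,\omega_\dq+Z]$ of fixed length, and one simply picks $m$ large enough that $K^{(m)}$ meets every subinterval of $\A$ of length $Z$. Your route is compactness-and-contradiction: extract weak limits of a failing sequence, pass the Riemann sums to an integral, invoke analyticity only \emph{qualitatively} (the identity theorem) to reduce to $\omega\to0$, and then upgrade weak to strong $H^1$-convergence via the structure of the linearised equation and the uniform positivity of $\int_\A|u_\omega|^2\,d\omega$. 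What the paper's argument buys is a cleaner, more constructive statement: the constant $C$ and the number $m$ trace back explicitly to the unique-continuation lemma and the Lipschitz constant in $\omega$, with no compactness needed; your argument is closer to a standard Fredholm scheme and recovers the same statement with non-constructive constants.

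One step in your sketch is not justified by the hypotheses as stated: you assert that $\dq$ ``vanishes near $\bo$, as $q\equiv1$ there'' in order to apply Poincar\'e. The theorem only assumes $\dq\in H^1_\nu(\Omega;\R)$, so this is not available; your injectivity-at-small-$\omega$ step then only yields $\nabla\dq=0$, i.e.\ $\dq$ is constant, and the strong upgrade does not contradict $\|\dq_m\|_{H^1}=1$ (the sequence could converge to a nonzero constant; indeed for $q\equiv1$ one has $DF_\omega[q](c)=0$ for every constant $c$). The paper's own proof has the same soft spot (it asserts $\|\nabla\dq\|_{L^2}\ge C>0$ from $\|\dq\|_{H^1}=1$), so this is really an omission in the \emph{statement}: the natural tangent space to $Q$ consists of $\dq$ supported in $\Omega'$, and with that restriction both arguments go through unchanged. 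Apart from this, your outline is sound; the equi-Lipschitz dependence on $\omega$, the stability $u_\omega[q_m]\to u_\omega[q]$ under $q_m\to q$ in $L^p$, and the positivity of $G=\int_\A|u_\omega|^2\,d\omega$ all follow from the a~priori estimates recorded in the paper.
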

The proof of this theorem is presented in Section~\ref{sec:Convergence-of-the}.
In view of the results in \cite{1995-hanke-neubauer-scherzer,2014arXiv1403.5708A},
the convergence of the Landweber iteration follows from the Lipschitz
continuity of $F_{\omega}$ and from inequality (\ref{eq:Landweberinequality}).
The Lipschitz continuity of $F_{\omega}$ is a simple consequence
of the elliptic theory. 

On the other hand, the lower bound given in (\ref{eq:Landweberinequality})
is non-trivial, since it represents the uniqueness and stability of
the multi-frequency linearized inverse problem
\[
(DF_{\omega}[q](\dq))_{\omega\in K^{(m)}}\longmapsto\dq.
\]
As it has been discussed in the Introduction, the kernels of the operators
$\dq\mapsto DF_{\omega}[q](\dq)$ ``move'' as $\omega$ changes.
More precisely, the intersection of the kernels corresponding to the
a priori determined finite set of frequencies $K^{(m)}$ is empty. Moreover,
the argument automatically gives an a priori constant $C$ in (\ref{eq:Landweberinequality}).

The multi-frequency method is based on the analytic dependence of
the problem with respect to the frequency $\omega$, and on the fact
that in $\omega=0$ the problem is well posed. Indeed, when $\omega\to0$
it is easy to see that $ $$u_{\omega}\to1$ in (\ref{eq:helmholtz}),
so that $u_{0}=1$. Thus, looking at (\ref{eq:psi_omega}), the measurement
datum $\psi_{0}$ is nothing else than $q^{*}$ (up to a constant).
Therefore, $q^{*}$ could be easily determined when $\omega=0$ since
$q^{*}$ is known on the boundary $\bo$. As we show in the following
section, the analyticity of the problem with respect to $\omega$
allows to ``transfer'' this property to the desired range of frequencies
$\A$.

\section{\label{sec:Convergence-of-the}Convergence of the Landweber Iteration}

In order to use the well-posedness of the problem in $\omega=0$ we
shall need the following result on quantitative unique continuation
for vector-valued holomorphic functions.
\begin{lem}
\label{lem:unique continuation}Let $V$ be a complex Banach space,
$\A=[\omin,\omax]\subset\R_{+}$, $C_{0},D>0$ and $g\colon B(0,\omax)\to V$
be holomorphic such that $\left\Vert g(0)\right\Vert \ge C_{0}$ and
\[
\sup_{\omega\in B(0,\omax)}\left\Vert g(\omega)\right\Vert \le D.
\]
Then there exists $\omega\in\A$ such that
\[
\left\Vert g(\omega)\right\Vert \ge C
\]
for some $C>0$ depending only on $\A$, $C_{0}$ and $D$.\end{lem}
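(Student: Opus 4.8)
The plan is to reduce the vector-valued claim to a scalar one via Hahn--Banach and then conclude by a normal families argument. First I would exploit that $\|g(0)\|\ge C_0>0$: by Hahn--Banach there is a functional $\Lambda\in V^{*}$ with $\|\Lambda\|_{V^{*}}=1$ and $\Lambda(g(0))=\|g(0)\|$. The composition $f:=\Lambda\circ g\colon B(0,\omax)\to\C$ is then scalar holomorphic, and it inherits the two hypotheses: $|f(0)|=\|g(0)\|\ge C_0$ and $\sup_{B(0,\omax)}|f|\le\|\Lambda\|_{V^{*}}\sup_{B(0,\omax)}\|g\|\le D$. Since $|f(\omega)|\le\|g(\omega)\|$ for every $\omega$, it suffices to find $\omega\in\A$ with $|f(\omega)|\ge C$, where $C$ depends only on $\A$, $C_0$, $D$ and \emph{not} on the particular $f$ (hence not on $g$).

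It thus remains to prove the scalar statement: every holomorphic $f\colon B(0,\omax)\to\C$ with $|f(0)|\ge C_0$ and $\sup_{B(0,\omax)}|f|\le D$ satisfies $\sup_{\omega\in\A'}|f(\omega)|\ge C$, where $\A'=[\omin,\tfrac{1}{2}(\omin+\omax)]$ is a compact subinterval of $\A$ lying inside the open disc. I would argue by contradiction: were no such $C$ to exist, there would be a sequence $f_n$ of admissible scalar functions with $\sup_{\A'}|f_n|\to0$. The uniform bound $\sup_{B(0,\omax)}|f_n|\le D$ makes the family normal, so by Montel's theorem a subsequence converges locally uniformly on $B(0,\omax)$ to a holomorphic limit $f_\infty$. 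Passing to the limit gives $|f_\infty(0)|\ge C_0>0$ while $f_\infty\equiv0$ on $\A'$; since $\A'$ is a nondegenerate interval, its points accumulate inside the disc, and the identity theorem forces $f_\infty\equiv0$, contradicting $|f_\infty(0)|\ge C_0$. Hence $C$ exists, and since $|f|$ attains its supremum on the compact set $\A'$ at some $\omega^{*}\in\A'\subset\A$, this $\omega^{*}$ is the desired frequency.

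The step I expect to be the main obstacle is making the compactness argument legitimate: Montel's theorem fails for Banach-space-valued holomorphic functions, since a uniformly bounded family need not be relatively compact when $V$ is infinite dimensional. The Hahn--Banach reduction in the first paragraph is precisely what circumvents this, by transferring the whole problem to scalar functions where normal families are available. A minor technical point is that the right endpoint $\omax$ lies on the boundary of the open disc $B(0,\omax)$; I sidestep any loss of control there by working on the strictly interior subinterval $\A'$, which is harmless because a point found in $\A'$ already belongs to $\A$. Note finally that the statement only asks for the existence of $C$, so the non-constructive compactness argument suffices; a quantitative constant could instead be extracted from the Hadamard three-circle inequality if an explicit value were needed.
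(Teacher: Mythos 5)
Your proof is correct and is essentially the paper's own argument: both reduce to scalar functions via a norm-attaining Hahn--Banach functional and then combine Montel's theorem with the identity theorem in a compactness-by-contradiction argument, the only difference being that you perform the Hahn--Banach reduction once up front and run the contradiction at the scalar level, whereas the paper contradicts the vector-valued statement first and applies Hahn--Banach to each term of the sequence. Your restriction to the subinterval $\A'=[\omin,\tfrac12(\omin+\omax)]$ is a sensible (and slightly more careful) way to avoid the endpoint $\omax$ sitting on the boundary of $B(0,\omax)$, a point the paper glosses over.
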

\begin{proof}
By contradiction, assume that there exists a sequence of holomorphic
functions $g_{n}\colon B(0,\omax)\to V$ such that $\left\Vert g_{n}(0)\right\Vert \ge C_{0}$,
$\sup_{\omega\in B(0,\omax)}\left\Vert g_{n}(\omega)\right\Vert \le D$
and $\max_{\omega\in\A}\left\Vert g_{n}(\omega)\right\Vert \to0$.
By Hahn Banach theorem, for any $n$ there exists $T_{n}\in V'$ such
that $\left\Vert T_{n}\right\Vert \le1$ and $T_{n}(g_{n}(0))=\left\Vert g_{n}(0)\right\Vert $.
Set $f_{n}:=T_{n}\circ g_{n}\colon B(0,\omax)\to\C$. Thus $(f_{n})$
is a family of complex-valued uniformly bounded holomorphic functions,
since
\[
\left|f_{n}(\omega)\right|\le\left\Vert T_{n}\right\Vert \left\Vert g_{n}(\omega)\right\Vert \le D,\qquad\omega\in B(0,\omax).
\]
As a consequence, by standard complex analysis, there exists a holomorphic
function $f\colon B(0,\omax)\to\C$ such that $f_{n}\to f$ uniformly.
We readily observe that for any $\omega\in\A$ there holds
\[
\left|f(\omega)\right|=\lim_{n}\left|f_{n}(\omega)\right|\le\lim_{n}\left\Vert T_{n}\right\Vert \left\Vert g_{n}(\omega)\right\Vert =0,
\]
since $\max_{\omega\in\A}\left\Vert g_{n}(\omega)\right\Vert \to0$.
By the unique continuation theorem $f(0)=0$. On the other hand, as
$T_{n}(g_{n}(0))=\left\Vert g_{n}(0)\right\Vert $, 
\[
f(0)=\lim_{n}f_{n}(0)=\lim\left\Vert g_{n}(0)\right\Vert \ge C_{0}>0,
\]
which yields a contradiction.
\end{proof}
In view of (\ref{eq:Landweberinequality}), we need to
study the Fr\'echet differentiability of the map $F_{\omega}$ and characterize
its derivative. Before doing this, we study the well-posedness of
(\ref{eq:helmholtz}). The result is classical; for a proof, see \cite[Section 8.1]{melenk1995generalized}.
\begin{lem}
\label{lem:helmholtz full}Let $\Omega\subset\R^{d}$ be a bounded
and smooth domain for $d=2,3$, $\omega\in B(0,\omax)$ and $q\in Q$.
For any $f\in L^{2}(\Omega;\C)$ and $\phi\in H^{1}(\Omega;\C)$ the
problem
\begin{equation}
\left\{ \begin{array}{l}
\Delta u+\omega^{2}qu=\omega f\qquad\text{in }\Omega,\\
\frac{\partial u}{\partial\nu}-i\omega u=-i\omega\phi\qquad\text{on }\partial\Omega,
\end{array}\right.\label{eq:helmholtz full}
\end{equation}
augmented with the condition
\begin{equation}
\int_{\bo}u\, d\sigma=\int_{\bo}\phi\, d\sigma-i\int_{\Omega}f\, dx\label{eq:mistery}
\end{equation}
if $\omega=0$ admits a unique solution $u\in H^{2}(\Omega;\C)$.
Moreover
\[
\left\Vert u\right\Vert _{H^{2}(\Omega;\C)}\le C\bigl(\left\Vert f\right\Vert _{L^{2}(\Omega;\C)}+\left\Vert \phi\right\Vert _{H^{1}(\Omega;\C)}\bigr)
\]
for some $C>0$ depending only on $\Omega$, $\Lambda$ and $\omax$.
\end{lem}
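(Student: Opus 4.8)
The plan is to pass \eqref{eq:helmholtz full} to its weak formulation, deduce existence and the estimate from the Fredholm alternative together with a compactness argument, and establish the underlying injectivity by an energy identity combined with unique continuation; the degenerate frequency $\omega=0$ is treated separately. At $\omega=0$ the problem collapses to $\Delta u=0$ in $\Omega$ with $\partial_\nu u=0$ on $\bo$, whose only solutions are the constants, and the augmentation \eqref{eq:mistery} selects the value $u\equiv\frac{1}{|\bo|}(\int_{\bo}\phi\,d\sigma-i\int_\Omega f\,dx)$, for which the estimate is immediate. (The factor $\omega$ on the source and the precise form of \eqref{eq:mistery} are exactly what will make the solution of \eqref{eq:helmholtz full} depend holomorphically on $\omega$ across this point, as needed later.) For $\omega\in B(0,\omax)\setminus\{0\}$ I would test against $\bar v$, integrate by parts and insert the boundary condition to obtain the sesquilinear form
\[
a_\omega(u,v)=\int_\Omega\nabla u\cdot\nabla\bar v\,dx-\omega^2\int_\Omega q\,u\bar v\,dx-i\omega\int_{\bo}u\bar v\,d\sigma,
\]
with data functional $L_\omega(v)=-\omega\int_\Omega f\bar v\,dx-i\omega\int_{\bo}\phi\bar v\,d\sigma$, so that the weak problem reads $a_\omega(u,v)=L_\omega(v)$ for all $v\in H^1(\Omega;\C)$. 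Splitting $a_\omega=b+c_\omega$ into the coercive part $b(u,v)=\int_\Omega(\nabla u\cdot\nabla\bar v+u\bar v)\,dx$ and a remainder $c_\omega$ built only from the $L^2(\Omega)$ and $L^2(\bo)$ pairings, the operator on $H^1$ representing $c_\omega$ is compact, by the Rellich embedding $H^1\hookrightarrow L^2(\Omega)$ and the compactness of the trace $H^1\to L^2(\bo)$. Hence the operator $A_\omega$ associated with $a_\omega$ is Fredholm of index zero and entire in $\omega$, and the Fredholm alternative reduces existence and uniqueness to injectivity.

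Injectivity of the homogeneous problem is the heart of the matter. For real $\omega\neq0$ I would set $v=u$ and take the imaginary part of $a_\omega(u,u)=0$: since $\int_\Omega|\nabla u|^2$ and $\omega^2\int_\Omega q|u|^2$ are real, this forces $\omega\int_{\bo}|u|^2\,d\sigma=0$, whence $u=0$ on $\bo$ and, by the boundary condition, $\partial_\nu u=0$ on $\bo$; unique continuation for the Helmholtz operator with the bounded potential $\omega^2q$ then gives $u\equiv0$. The same holds for $\mathrm{Im}\,\omega\ge0$, where the boundary term keeps a favourable sign. Extending injectivity to the open lower half of the disk is the main obstacle, because there the energy identity no longer controls $\int_{\bo}|u|^2$ and the homogeneous problem may carry the resonances of the impedance operator. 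Here I would invoke analyticity: $A_\omega$ is entire and invertible at every nonzero real frequency, so by the analytic Fredholm theorem the exceptional set on which injectivity fails is discrete in $B(0,\omax)$, and by the sign argument it is confined to $\{\mathrm{Im}\,\omega<0\}$; one then keeps $\omax$ below the first resonance, uniformly in $q\in Q$, so that $B(0,\omax)$ meets no exceptional frequency. This uniform separation from resonances is the quantitative crux and is where the a priori choice of the range $\A$ enters.

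It remains to upgrade to $H^2$ and to make the constant uniform. Rewriting \eqref{eq:helmholtz full} as $\Delta u=\omega f-\omega^2q\,u$ with right-hand side in $L^2(\Omega)$ (using $q\in L^\infty$) and Neumann datum $\partial_\nu u=i\omega(u-\phi)\in H^{1/2}(\bo)$, elliptic regularity on the smooth domain $\Omega$ yields $u\in H^2(\Omega;\C)$ with $\|u\|_{H^2}\le C(\|f\|_{L^2}+\|\phi\|_{H^1}+\|u\|_{H^1})$. To reach the stated bound with $C=C(\Omega,\Lambda,\omax)$ I would prove $\|u\|_{H^1}\le C(\|f\|_{L^2}+\|\phi\|_{H^1})$ uniformly in $q$ by a compactness--contradiction argument: a violating sequence $(\omega_n,q_n,f_n,\phi_n,u_n)$ with $\|u_n\|_{H^1}=1$ and $\|f_n\|_{L^2}+\|\phi_n\|_{H^1}\to0$ admits, after extraction, $\omega_n\to\omega_*\in\overline{B(0,\omax)}$, $q_n\to q_*$ in $L^2$ with $q_*\in Q$, and $u_n\rightharpoonup u_*$ in $H^1$; passing to the limit in $a_{\omega_n}(u_n,v)=L_{\omega_n}(v)$ shows that $u_*$ solves the homogeneous problem, so $u_*=0$ by injectivity, and then the identity $\int_\Omega|\nabla u_n|^2=\omega_n^2\int_\Omega q_n|u_n|^2+i\omega_n\int_{\bo}|u_n|^2+L_{\omega_n}(u_n)\to0$, together with $u_n\to0$ in $L^2(\Omega)$, forces $\|u_n\|_{H^1}\to0$, contradicting $\|u_n\|_{H^1}=1$. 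The precompactness of $Q$ in $L^2(\Omega)$ and Rellich's theorem are exactly what render the final constant independent of the particular $q\in Q$.
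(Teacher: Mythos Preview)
The paper does not give its own proof of this lemma; it labels the result as classical and defers to \cite[Section~8.1]{melenk1995generalized}. So there is no argument to compare against line by line. Your outline --- weak formulation, Fredholm alternative via a compact perturbation of a coercive form, injectivity from the energy identity plus unique continuation, $H^2$ regularity, and a compactness--contradiction argument for the uniform constant --- is exactly the standard route and is correct for real $\omega$ and for $\mathrm{Im}\,\omega\ge 0$.

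The one place where your argument does not close is the one you flag yourself: injectivity for $\omega$ in the open lower half of the disk. Your proposed fix, ``keep $\omax$ below the first resonance, uniformly in $q\in Q$,'' is not available to you here: in the paper $\omax=K_{max}$ is the upper endpoint of the \emph{given} physical frequency range $\A$, not a parameter you may shrink after the fact. The interior impedance problem genuinely has resonances in $\{\mathrm{Im}\,\omega<0\}$, and nothing in the stated hypotheses prevents them from lying inside $B(0,\omax)$. So this is a real gap in your argument --- and, to be fair, arguably an imprecision in the lemma as stated, since the cited reference deals with real frequencies. What the downstream application (Lemma~\ref{lem:unique continuation} and Proposition~\ref{prop:inequality}) actually requires is holomorphy and a uniform bound on a simply connected neighbourhood of $[0,\omax]$; replacing $B(0,\omax)$ by such a neighbourhood that avoids the discrete resonance set (uniformly over $q\in Q$, using your compactness argument) would repair both the lemma and your proof without touching the rest of the paper.
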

Since for $\omega=0$ the solution to \eqref{eq:helmholtz full} is unique up to a constant, condition
(\ref{eq:mistery}) is needed to have uniqueness. Even though it may
seem mysterious, this condition is natural in order to ensure continuity
of $u$ with respect to $\omega$. Indeed an integration by parts
gives
\[
\omega\int_{\Omega}f\, dx=\int_{\bo}\frac{\partial u}{\partial\nu}\, d\sigma+\omega^{2}\int_{\Omega}qu\, dx=i\omega\int_{\bo}(u-\phi)\, d\sigma+\omega^{2}\int_{\Omega}qu\, dx,
\]
whence for $\omega\neq0$ we obtain
\[
\int_{\bo}u\, d\sigma=\int_{\bo}\phi\, d\sigma-i\int_{\Omega}f\, dx+\omega i\int_{\Omega}qu\, dx,
\]
and so for $\omega=0$ we are left with (\ref{eq:mistery}). The
above condition is a consequence of (\ref{eq:helmholtz full}) for
$\omega\neq0$, but needs to be added in the case $\omega=0$ to guarantee
uniqueness.

Let us go back to (\ref{eq:helmholtz}). Fix $\phi\in H^{1}(\Omega;\C)$
and $\omega\in B(0,\omax)$. By Lemma~\ref{lem:helmholtz full} the
problem
\begin{equation}
\left\{ \begin{array}{l}
\Delta u_{\omega}+\omega^{2}qu_{\omega}=0\text{ in }\Omega,\\
\frac{\partial u_{\omega}}{\partial n}-i\omega u_{\omega}=-i\omega\phi\text{ on }\partial\Omega,
\end{array}\right.\label{eq:helmholtz 2}
\end{equation}
together with condition
\begin{equation}
\int_{\bo}u_{\omega}\, d\sigma=\int_{\bo}\phi\, d\sigma+\omega i\int_{\Omega}qu_{\omega}\, dx\label{eq:mistery 2}
\end{equation}
admits a unique solution $u_{\omega}\in H^{2}(\Omega;\C)$ such that
\begin{equation}
\left\Vert u_{\omega}\right\Vert _{H^{2}(\Omega;\C)}\le C\left\Vert \phi\right\Vert _{H^{1}(\Omega;\C)}\label{eq:uomega H2}
\end{equation}
for some $C>0$ depending only on $\Omega$, $\Lambda$ and $\omax$.
As above, (\ref{eq:mistery 2}) guarantees uniqueness and continuity
in $\omega=0$ and is implicit in (\ref{eq:helmholtz 2}) if $\omega\neq0$.

Next, we study the dependence of $u_{\omega}$ on $\omega$.
\begin{lem}
\label{lem:holomorphic}Let $\Omega\subset\R^{d}$ be a bounded
and smooth domain for $d=2,3$, $q\in Q$ and $\phi\in H^{1}(\Omega;\C)$.
The map
\[
\mathcal{F}(q)\colon B(0,\omax)\longrightarrow H^{2}(\Omega;\C),\qquad\omega\longmapsto u_{\omega}
\]
is holomorphic. Moreover, the derivative $\partial_{\omega}u_{\omega}\in H^{2}(\Omega;\C)$
is the unique solution to
\begin{equation}
\left\{ \begin{array}{l}
\Delta\partial_{\omega}u_{\omega}+\omega^{2}q\partial_{\omega}u_{\omega}=-2\omega qu_{\omega}\text{ in }\Omega,\\
\frac{\partial(\partial_{\omega}u_{\omega})}{\partial\nu}-i\omega\partial_{\omega}u_{\omega}=iu_{\omega}-i\phi\text{ on }\partial\Omega,
\end{array}\right.\label{eq:helmholtz derivative}
\end{equation}
together with condition
\begin{equation}
\int_{\bo}\partial_{\omega}u_{\omega}\, d\sigma=\omega i\int_{\Omega}q\partial_{\omega}u_{\omega}\, dx+i\int_{\Omega}qu_{\omega}\, dx,\label{eq:mistery derivative}
\end{equation}
and satisfies
\[
\left\Vert \partial_{\omega}u_{\omega}\right\Vert _{H^{2}(\Omega;\C)}\le C\left\Vert \phi\right\Vert _{H^{1}(\Omega;\C)}
\]
for some $C>0$ depending only on $\Omega$, $\Lambda$ and $\omax$.\end{lem}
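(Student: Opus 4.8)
The plan is to show first that $\omega\mapsto u_\omega$ is an $H^2$-valued holomorphic function on $B(0,\omax)$, and then to read off \eqref{eq:helmholtz derivative}--\eqref{eq:mistery derivative} together with the stated bound by differentiating the identities that define $u_\omega$. The only genuinely delicate point is the degeneration of the problem at $\omega=0$, where the Robin problem \eqref{eq:helmholtz 2} collapses to a Neumann problem, so I would \emph{not} try to run an implicit function theorem uniformly down to $\omega=0$. Instead, for $\omega\neq 0$ the problem \eqref{eq:helmholtz 2} is the standard Robin--Helmholtz problem, and by Lemma~\ref{lem:helmholtz full} the operator $w\mapsto(\Delta w+\omega^{2}qw,\ \partial_\nu w-i\omega w)$ is an isomorphism from $H^{2}(\Omega;\C)$ onto $L^{2}(\Omega;\C)\times H^{1/2}(\bo)$, with coefficients polynomial in $\omega$. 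Since the inverse of a holomorphic family of isomorphisms is holomorphic in operator norm, and the data $(0,-i\omega\phi|_{\bo})$ depends holomorphically (polynomially) on $\omega$, the map $\omega\mapsto u_\omega$ is holomorphic on the punctured disk $B(0,\omax)\setminus\{0\}$; note that on this set the normalization \eqref{eq:mistery 2} is automatic and plays no role.

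Second, I would extend holomorphy across $\omega=0$ by a removable-singularity argument. The uniform estimate \eqref{eq:uomega H2}, valid on all of $B(0,\omax)$ with constant independent of $\omega$, shows that $\omega\mapsto u_\omega$ is bounded near $0$. A bounded $H^{2}$-valued holomorphic function on a punctured disk extends holomorphically across the puncture (the vector-valued Riemann removable singularity theorem, reduced to the scalar case by composing with functionals in $(H^{2})'$), and by continuity the extension coincides with the solution $u_{0}$. Hence $\mathcal{F}(q)$ is holomorphic on the full disk $B(0,\omax)$.

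Third, the derivative equation follows by differentiation. The three identities defining $u_\omega$---the PDE $\Delta u_\omega+\omega^{2}qu_\omega=0$, the boundary relation $\partial_\nu u_\omega-i\omega u_\omega=-i\omega\phi$, and the normalization \eqref{eq:mistery 2}---hold for every $\omega$ and depend holomorphically on $\omega$, as products and compositions of holomorphic maps into $L^{2}(\Omega;\C)$, $H^{1/2}(\bo)$ and $\C$ respectively. Differentiating each in $\omega$, and using that $\partial_\omega$ commutes with $\Delta$, with $\partial_\nu$ and with the boundary integral, produces precisely \eqref{eq:helmholtz derivative} and \eqref{eq:mistery derivative}; uniqueness for that problem then identifies the $H^{2}$-derivative of $\mathcal{F}(q)$ as $\partial_\omega u_\omega$.

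Finally, for the bound I would apply the uniform well-posedness estimate to problem \eqref{eq:helmholtz derivative}--\eqref{eq:mistery derivative}, with right-hand side $-2\omega qu_\omega$, i.e.\ $f=-2qu_\omega$ in the notation of \eqref{eq:helmholtz full} (so that $\|f\|_{L^{2}}\le C\|u_\omega\|_{L^{2}}\le C\|\phi\|_{H^{1}}$ by \eqref{eq:uomega H2}), and boundary datum $i(u_\omega-\phi)$, whose $H^{1}$ norm is controlled by $\|u_\omega\|_{H^{2}}+\|\phi\|_{H^{1}}\le C\|\phi\|_{H^{1}}$. \textbf{The main obstacle is exactly here:} the derivative problem carries boundary data $i(u_\omega-\phi)$, which is \emph{not} of the special form $-i\omega\phi$ assumed in Lemma~\ref{lem:helmholtz full}, so its solvability at $\omega=0$ is not immediate. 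At $\omega=0$ it is the pure Neumann problem $\Delta(\partial_\omega u_{0})=0$, $\partial_\nu(\partial_\omega u_{0})=i(u_{0}-\phi)$, which is solvable only if $\int_{\bo}(u_{0}-\phi)\,d\sigma=0$; but this is precisely \eqref{eq:mistery 2} evaluated at $\omega=0$. Thus the ``mysterious'' condition is what makes the derivative problem well posed down to $\omega=0$, and the technical result that must be in place is a uniform-in-$\omega$ well-posedness estimate of the type in Lemma~\ref{lem:helmholtz full} allowing general compatible boundary data. Alternatively, the same bound can be obtained directly from Cauchy's estimate on a subdisk of $B(0,\omax)$, using only the uniform bound \eqref{eq:uomega H2}.
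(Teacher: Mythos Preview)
Your argument is correct, but it takes a different route from the paper's proof. The paper proceeds by a direct difference-quotient computation: for fixed $\omega$ one sets $v_h=(u_{\omega+h}-u_\omega)/h$, writes down the boundary value problem satisfied by $v_h$ (with right-hand side $-2\omega q u_{\omega+h}-hqu_{\omega+h}$ and boundary data $i(u_{\omega+h}-\phi)$), and then uses the well-posedness of Lemma~\ref{lem:helmholtz full} to show $u_{\omega+h}\to u_\omega$ and hence $v_h\to\partial_\omega u_\omega$ in $H^2$; the case $\omega=0$ is handled separately by appending the integral condition. Your approach instead invokes the holomorphy of the inverse of an analytic family of isomorphisms to get holomorphy on the punctured disk, then the vector-valued removable singularity theorem (together with the uniform bound \eqref{eq:uomega H2}) to extend across $\omega=0$, and finally reads off the derivative problem and the $H^2$ estimate by differentiating the defining identities and appealing to Cauchy's inequality. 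The paper's route is more elementary and self-contained, while yours is cleaner at $\omega=0$: the removable singularity argument and Cauchy's estimate absorb the degeneration automatically, without the ad hoc case split the paper performs. Your explicit identification of the obstacle---that the derivative problem's boundary data $i(u_\omega-\phi)$ is not of the special form in Lemma~\ref{lem:helmholtz full}, and that the compatibility at $\omega=0$ is rescued precisely by \eqref{eq:mistery 2}---is a point the paper's sketch glosses over; your fallback to Cauchy's estimate is the tidiest way to close this (with the usual caveat that a uniform constant on all of $B(0,\omax)$ requires the holomorphy and bound on a slightly larger disk, which Lemma~\ref{lem:helmholtz full} provides by replacing $\omax$ with any $\omax'>\omax$).
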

\begin{proof}
The proof of this result is completely analogous to the ones given
in \cite{2013InvPr..29k5007A,2013-albertigs,2014arXiv1403.5708A}
in similar situations. Here only a sketch will be presented.

Fix $\omega\in B(0,\omax)$: we shall prove that $\mathcal{F}(q)$
is holomorphic in $\omega$ and that the derivative is $\partial_{\omega}u_{\omega}$,
i.e., the unique solution to (\ref{eq:helmholtz derivative})-(\ref{eq:mistery derivative}).
For $h\in\C$ let $v_{h}=(u_{\omega+h}-u_{\omega})/h$. We need to
prove that $v_{h}\to\partial_{\omega}u_{\omega}$ in $H^{2}(\Omega)$
as $h\to0$. Suppose first $\omega\neq0$. A direct calculation shows
that
\[
\left\{ \begin{array}{l}
\Delta v_{h}+\omega^{2}qv_{h}=-2\omega qu_{\omega+h}-hqu_{\omega+h}\text{ in }\Omega,\\
\frac{\partial v_{h}}{\partial\nu}-i\omega v_{h}=i(u_{\omega+h}-\phi)\text{ on }\partial\Omega.
\end{array}\right.
\]
Arguing as in Lemma~\ref{lem:helmholtz full}, we obtain $u_{\omega+h}\to u_{\omega}$
as $h\to0$ in $H^{2}(\Omega)$, whence $v_{h}\to\partial_{\omega}u_{\omega}$
in $H^{2}(\Omega)$, as desired.

When $\omega=0$, the above system must be augmented with the condition

\[
\int_{\bo}v_{h}\, d\sigma=i\int_{\Omega}qu_{0}\, dx,
\]
which is a simple consequence of (\ref{eq:mistery 2}), and the result
follows.
\end{proof}
We now study the Fr\'echet differentiability of the map $F_{\omega}$
defined in (\ref{eq:map F}). The proof of this result is trivial,
and the details are left to the reader.
\begin{lem}
\label{lem:frechet differentiability}Let $\Omega\subset\R^{d}$
be a bounded and smooth domain for $d=2,3$, $q\in Q$, $\omega\in B(0,\omax)$
and $\phi\in H^{1}(\Omega;\C)$. The map $F_{\omega}$ is Fr\'echet
differentiable and for $\dq\in H_{\nu}^{1}(\Omega;\R)$, the derivative
$\dpsi_{\omega}(\dq):=DF_{\omega}[q](\dq)$ is the unique solution
to the problem
\[
\left\{ \begin{array}{l}
\Delta\dpsi_{\omega}(\dq)=\div\left(|u_{\omega}|^{2}\nabla\dq+(\overline{u}_{\omega}v_{\omega}(\dq)+u_{\omega}\overline{v}_{\omega}(\dq))\nabla q\right)\text{ in }\Omega,\\
\frac{\partial\dpsi_{\omega}(\dq)}{\partial\nu}=0\text{ on }\partial\Omega,\\
\int_{\Omega}\dpsi_{\omega}(\dq)\, dx=0,
\end{array}\right.
\]
where $v_{\omega}(\dq)\in H^{2}(\Omega;\C)$ is the unique solution
to
\begin{equation}
\left\{ \begin{array}{l}
\Delta v_{\omega}(\dq)+\omega^{2}qv_{\omega}(\dq)=-\omega^{2}\dq\, u_{\omega}\text{ in }\Omega,\\
\frac{\partial v_{\omega}(\dq)}{\partial\nu}-i\omega v_{\omega}(\dq)=0\text{ on }\partial\Omega,
\end{array}\right.\label{eq:v omega}
\end{equation}
together with $\int_{\bo}v_{0}(\dq)\, d\sigma=0$ if $\omega=0$.
In particular, $F_{\omega}$ is Lipschitz continuous, namely
\[
\left\Vert \dpsi_{\omega}(\dq)\right\Vert _{H^{1}(\Omega;\R)}\le C(\Omega,\Lambda,\omax,\left\Vert \phi\right\Vert _{H^{1}(\Omega;\C)})\left\Vert \dq\right\Vert _{H^{1}(\Omega;\R)}.
\]

\end{lem}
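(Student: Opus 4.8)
The plan is to differentiate the solution map $q\mapsto\psi_\omega(q)$ by factoring it through the intermediate field $u_\omega$ and the linear solution operator of the Neumann problem \eqref{eq:psi_omega}. First I would show that the map $q\mapsto u_\omega$, into $H^2(\Omega;\C)$, is Fr\'echet differentiable with derivative $\dq\mapsto v_\omega(\dq)$ given by \eqref{eq:v omega}. This is the exact analogue, with $q$ in place of $\omega$, of the differentiation carried out in Lemma~\ref{lem:holomorphic}: formally differentiating $\Delta u_\omega+\omega^2 q u_\omega=0$ in the direction $\dq$ gives $\Delta v_\omega(\dq)+\omega^2 q v_\omega(\dq)=-\omega^2\dq\,u_\omega$, the $q$-independent Robin condition differentiates to $\partial_\nu v_\omega(\dq)-i\omega v_\omega(\dq)=0$, and differentiating the normalization \eqref{eq:mistery 2} yields $\int_{\bo}v_0(\dq)\,d\sigma=0$ at $\omega=0$. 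Well-posedness, the $H^2$ estimate, and the identification of this linear system as the derivative all follow from Lemma~\ref{lem:helmholtz full} applied with $f=-\omega\,\dq\,u_\omega$ and zero Robin datum; the Sobolev embedding $H^2(\Omega)\hookrightarrow L^\infty(\Omega)$, valid for $d\le3$, gives $\|\dq\,u_\omega\|_{L^2}\le C\|\dq\|_{H^1}\|u_\omega\|_{H^2}$, and together with \eqref{eq:uomega H2} this yields $\|v_\omega(\dq)\|_{H^2}\le C\|\phi\|_{H^1}\|\dq\|_{H^1}$.

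Next I would write $\psi_\omega(q)=\mathcal{S}\bigl(\div(|u_\omega|^2\nabla q)\bigr)$, where $\mathcal{S}$ is the bounded linear map sending a source to the unique zero-mean solution of the homogeneous Neumann problem \cite[Chapter I, Theorem 3.2 and Corollary 3.4]{GIRAULT-RAVIART-1986}, with $\|\mathcal{S}(\div\mathbf{g})\|_{H^1}\le C\|\mathbf{g}\|_{L^2}$. Since $\mathcal{S}$ is linear and continuous and $F_\omega$ differs from $\psi_\omega$ only by the fixed field $\psi_\omega^*$, the chain rule reduces Fr\'echet differentiability of $F_\omega$ to that of the source $q\mapsto|u_\omega|^2\nabla q=u_\omega\overline{u}_\omega\,\nabla q$. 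The product rule, the differentiability of $q\mapsto u_\omega$ from the previous step, and the linearity of $q\mapsto\nabla q$ give the directional derivative
\[
|u_\omega|^2\nabla\dq+\bigl(\overline{u}_\omega v_\omega(\dq)+u_\omega\overline{v}_\omega(\dq)\bigr)\nabla q,
\]
which is real-valued since $\overline{u}_\omega v_\omega+u_\omega\overline{v}_\omega=2\,\mathrm{Re}(\overline{u}_\omega v_\omega)$. Applying $\mathcal{S}$ to its divergence reproduces exactly the stated problem for $\dpsi_\omega(\dq)$; the homogeneous Neumann and zero-mean conditions are inherited from $\mathcal{S}$, and the restriction to $\dq\in H_\nu^1(\Omega;\R)$ is precisely what makes the source compatible with $\partial_\nu\dpsi_\omega(\dq)=0$, since near $\bo$ one has $\nabla q=0$ and $|u_\omega|^2\nabla\dq\cdot\nu=|u_\omega|^2\partial_\nu\dq=0$.

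The Lipschitz bound then follows by reading off the estimate for $\mathcal{S}$ and bounding each term of the source in $L^2$ by H\"older's inequality together with $H^2\hookrightarrow L^\infty$: one has $\||u_\omega|^2\nabla\dq\|_{L^2}\le\|u_\omega\|_{L^\infty}^2\|\dq\|_{H^1}$ and $\|(\overline{u}_\omega v_\omega+u_\omega\overline{v}_\omega)\nabla q\|_{L^2}\le2\|u_\omega\|_{L^\infty}\|v_\omega\|_{L^\infty}\|q\|_{H^1}$, so that inserting $\|u_\omega\|_{H^2}\le C\|\phi\|_{H^1}$, $\|v_\omega(\dq)\|_{H^2}\le C\|\phi\|_{H^1}\|\dq\|_{H^1}$ and $\|q\|_{H^1}\le\Lambda$ produces the constant $C(\Omega,\Lambda,\omax,\|\phi\|_{H^1})$. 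The only step that is not purely formal is the passage from these candidate derivatives to genuine Fr\'echet derivatives, i.e.\ verifying that the difference quotients of $u_\omega$ and of the quadratic term $|u_\omega|^2$ converge in the appropriate norms with controlled remainders; this is where the quadratic nonlinearity and the dimensional restriction $d\le3$ genuinely enter, but the required estimates are routine and entirely parallel to those underlying Lemmas~\ref{lem:helmholtz full} and \ref{lem:holomorphic}.
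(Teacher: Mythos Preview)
Your proposal is correct and is exactly the argument the paper has in mind: the authors do not write out a proof at all, stating only that ``the proof of this result is trivial, and the details are left to the reader.'' Your outline---differentiating $q\mapsto u_\omega$ via Lemma~\ref{lem:helmholtz full} in complete analogy with Lemma~\ref{lem:holomorphic}, then composing with the linear Neumann solution operator and reading off the $H^1$ bound through $H^2\hookrightarrow L^\infty$---supplies precisely those omitted details.
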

The main step in the proof of Theorem~\ref{thm:main} is inequality
(\ref{eq:Landweberinequality}), which we now prove. The argument
in the proof clarifies the multi-frequency method illustrated in the
previous section. The proof is structured as the proof of \cite[Theorem 1]{2014-albertigs}.
\begin{prop}
\label{prop:inequality}Set $\phi=1$. There exist $C>0$ and $m\in\N^{*}$
depending on $\Omega$, $\Lambda$ and $\A$ such that for any $q\in Q$
and $\dq\in H_{\nu}^{1}(\Omega;\R)$ 
\[
\sum_{\omega\in K^{(m)}}\left\Vert DF_{\omega}[q](\dq)\right\Vert {}_{H^{1}(\Omega;\R)}d\omega\ge C\left\Vert \dq\right\Vert {}_{H^{1}(\Omega;\R)}.
\]
\end{prop}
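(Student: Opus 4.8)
The plan is to regard, for fixed $q\in Q$ and $\dq$, the derivative $\omega\mapsto DF_\omega[q](\dq)$ as the restriction to real frequencies of a holomorphic map $g\colon B(0,\omax)\to H^1(\Omega;\R)\otimes\C$, and then to transfer the well-posedness at $\omega=0$ to the range $\A$ via Lemma~\ref{lem:unique continuation}, discretising only at the very end.

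The first and subtlest step is to build this holomorphic extension. The difficulty is that $\dpsi_\omega(\dq)$ depends on $\omega$ through the \emph{real} quantities $|u_\omega|^2$ and $\overline{u}_\omega v_\omega(\dq)+u_\omega\overline{v}_\omega(\dq)$, and complex conjugation is antiholomorphic. I would resolve this by a Schwarz-reflection device: $\omega\mapsto u_\omega$ and $\omega\mapsto v_\omega(\dq)$ are holomorphic on $B(0,\omax)$ by Lemma~\ref{lem:holomorphic} (and the same argument applied to \eqref{eq:v omega}), hence $\omega\mapsto\overline{u_{\bar\omega}}$ and $\omega\mapsto\overline{v_{\bar\omega}(\dq)}$ are again holomorphic and agree with $\overline{u}_\omega$, $\overline{v}_\omega(\dq)$ when $\omega$ is real. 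Substituting these for the conjugates makes the divergence-form source of the $\dpsi_\omega$-problem a holomorphic $L^2(\Omega)^d$-valued map; composing with the $\omega$-independent bounded solution operator of $\{\Delta\,\cdot=\div(\,\cdot\,),\ \partial_\nu\,\cdot=0,\ \int_\Omega\cdot=0\}$ yields $g$. The $H^2$-bounds of Lemma~\ref{lem:holomorphic}, applied to both $u_\omega$ and $u_{\bar\omega}$ (and likewise for $v$) and uniform on $B(0,\omax)$, give $\sup_{B(0,\omax)}\|g(\omega)\|_{H^1}\le D\,\|\dq\|_{H^1}$ with $D=D(\Omega,\Lambda,\A)$, using $\phi=1$.

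Next I would compute the lower bound at the origin. With $\phi=1$ one has $u_0\equiv1$, while \eqref{eq:v omega} forces $v_0(\dq)=0$, so the source reduces to $\nabla\dq$ and $g(0)=\dpsi_0(\dq)$ solves $\Delta\dpsi_0=\Delta\dq$ with homogeneous Neumann data and zero mean; thus $g(0)=\dq-|\Omega|^{-1}\int_\Omega\dq\,dx$. The perturbations that matter are the admissible directions $q_1-q_2$ with $q_i\in Q$, which vanish on the open set $\Omega\setminus\Omega'$, so a Poincar\'e inequality controls both $\|\dq\|_{L^2}$ and the mean by $\|\nabla\dq\|_{L^2}$, giving $\|g(0)\|_{H^1}\ge C_0\|\dq\|_{H^1}$ with $C_0=C_0(\Omega)$ independent of $q$ and $\dq$ (without this vanishing the constant part sits in the kernel, e.g. $q\equiv\dq\equiv1$). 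Applying Lemma~\ref{lem:unique continuation} to $g/\|\dq\|_{H^1}$ with these $C_0,D$ then furnishes, for every $q$ and $\dq$, a frequency $\omega^\star\in\A$ with $\|DF_{\omega^\star}[q](\dq)\|_{H^1}\ge C\|\dq\|_{H^1}$, where $C=C(\A,C_0,D)$ is independent of $q$, $\dq$ and of the (unknown) location of $\omega^\star$.

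Finally I would discretise with a uniform $m$. Since $g$ is holomorphic and uniformly bounded on a disk $B(0,R)$ with $R>\omax$ (the estimates of Lemmas~\ref{lem:helmholtz full}--\ref{lem:holomorphic} persist past $\omax$ because the impedance condition is dissipative), Cauchy's estimate yields $\bigl|\,\|g(\omega)\|_{H^1}-\|g(\omega')\|_{H^1}\bigr|\le L\|\dq\|_{H^1}\,|\omega-\omega'|$ on $\A$, with $L=L(\A,D)$ independent of $q,\dq$. Choosing $m$ so that $L(\omax-\omin)/(m-1)\le C/2$ and letting $\omega_i^{(m)}\in K^{(m)}$ be the nearest sample to $\omega^\star$ gives $\|DF_{\omega_i^{(m)}}[q](\dq)\|_{H^1}\ge\tfrac C2\|\dq\|_{H^1}$, so the full sum dominates this single term. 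I expect the genuinely delicate point to be the holomorphic continuation across the conjugations, since it is what legitimises Lemma~\ref{lem:unique continuation} and hence the whole frequency-sweeping strategy; the remaining work is bookkeeping to keep $D$, $C_0$, $C$, $L$ and $m$ uniform over $q\in Q$ and $\dq$.
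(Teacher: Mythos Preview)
Your strategy is essentially the paper's: the same Schwarz-reflection device (replacing $\overline{u_\omega}$, $\overline{v_\omega(\rho)}$ by $\overline{u_{\bar\omega}}$, $\overline{v_{\bar\omega}(\rho)}$) to obtain a holomorphic map, the same appeal to Lemma~\ref{lem:unique continuation} to transport the $\omega=0$ lower bound into $\A$, and the same Lipschitz argument to pass from a single $\omega_\rho\in\A$ to the fixed sample $K^{(m)}$. The only cosmetic differences are that the paper takes $g_\rho$ with values in $H^1_\nu(\Omega;\C)'$ (the divergence level) rather than in $H^1$, and obtains the Lipschitz constant by bounding $g_\rho'$ directly via Lemma~\ref{lem:holomorphic} instead of invoking Cauchy's estimate on an enlarged disk. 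Your observation that constant $\rho$ kills $g(0)$ and must be excluded via the vanishing of admissible directions on $\Omega\setminus\Omega'$ is a genuine point that the paper's proof passes over in the step ``$\|\nabla\rho\|_{L^2}\ge C$ since $\|\rho\|_{H^1}=1$''.
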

\begin{proof}
In the proof, several positive constants depending only on $\Omega$,
$\Lambda$ and $\A$ will be denoted by $C$ or $Z$.

Fix $q\in Q$. For $\dq\in H_{\nu}^{1}(\Omega;\R)$ such that $\left\Vert \dq\right\Vert _{H^{1}(\Omega;\R)}=1$
define the map
\[
g_{\dq}(\omega)=\div\left(u_{\omega}\overline{u}_{\overline{\omega}}\nabla\dq+(\overline{u}_{\overline{\omega}}v_{\omega}(\dq)+u_{\omega}\overline{v}_{\overline{\omega}}(\dq))\nabla q\right),\qquad\omega\in B(0,\omax).
\]
Hence $g_{\dq}\colon B(0,\omax)\to H_{\nu}^{1}(\Omega;\C)'$ is holomorphic.
We shall apply Lemma~\ref{lem:unique continuation} to $g_{\dq}$,
and so we now verify the hypotheses.

Since $\phi=1$, by (\ref{eq:helmholtz 2})-(\ref{eq:mistery 2})
we have $u_{0}=1$ and by (\ref{eq:v omega}) we have $v_{0}(\dq)=0$,
whence $g_{\dq}(0)=\div(\nabla\dq)$. Since $\frac{\partial\dq}{\partial\nu}=0$
on $\bo$ there holds
\[
\left\Vert g_{\dq}(0)\right\Vert _{H_{\nu}^{1}(\Omega;\C)'}=\left\Vert \div(\nabla\dq)\right\Vert _{H_{\nu}^{1}(\Omega;\C)'}\ge C\left\Vert \nabla\dq\right\Vert _{L^{2}(\Omega)}\ge C>0,
\]
since $\left\Vert \dq\right\Vert _{H^{1}(\Omega;\R)}=1$. For $\omega\in B(0,\omax)$
we readily derive
\[
\begin{split}\left\Vert g_{\dq}(\omega)\right\Vert _{H_{\nu}^{1}(\Omega;\C)'} & \le C\left\Vert u_{\omega}\overline{u}_{\overline{\omega}}\nabla\dq+(\overline{u}_{\overline{\omega}}v_{\omega}(\dq)+u_{\omega}\overline{v}_{\overline{\omega}}(\dq))\nabla q\right\Vert _{L^{2}(\Omega)}\\
 & \le C\left(\left\Vert \dq\right\Vert _{H^{1}(\Omega)}+\left\Vert q\right\Vert _{H^{1}(\Omega)}\right)\\
 & \le C,
\end{split}
\]
where the second inequality follows from (\ref{eq:uomega H2}), Lemma~\ref{lem:helmholtz full}
applied to $v_{\omega}(\dq)$ and the Sobolev embedding $H^{2}\hookrightarrow L^{\infty}$.
Therefore, by Lemma~\ref{lem:unique continuation} there exists $\omega_{\dq}\in\A$
such that
\begin{equation}
\left\Vert g_{\dq}(\omega_{\dq})\right\Vert _{H_{\nu}^{1}(\Omega;\C)'}\ge C.\label{eq:first bound}
\end{equation}

Consider now for $\omega\in B(0,\omax)$
\[
g_{\dq}'(\omega)=\div\left((u'_{\omega}\overline{u}_{\overline{\omega}}+u{}_{\omega}\overline{u'}_{\overline{\omega}})\nabla\dq+(\overline{u'}_{\overline{\omega}}v_{\omega}(\dq)+\overline{u}_{\overline{\omega}}v'_{\omega}(\dq)+u'_{\omega}\overline{v}_{\overline{\omega}}(\dq)+u_{\omega}\overline{v'}_{\overline{\omega}}(\dq))\nabla q\right)
\]
where for simplicity the partial derivative $\partial_{\omega}$ is
replaced by $'$. Arguing as before, and using Lemma~\ref{lem:holomorphic}
we obtain
\[
\left\Vert g'_{\dq}(\omega)\right\Vert _{H_{\nu}^{1}(\Omega;\C)'}\le C,\qquad\omega\in B(0,\omax).
\]
As a consequence, by (\ref{eq:first bound}) we obtain 
\begin{equation}
\left\Vert g_{\dq}(\omega)\right\Vert _{H_{\nu}^{1}(\Omega;\C)'}\ge C,\qquad\omega\in[\omega_{\dq}-Z,\omega_{\dq}+Z]\cap\A.\label{eq:2nd bound}
\end{equation}
Since $\A=[\omin,\omax]$ 
there exists $P=P(Z,\A)\in\N$ such that
\begin{equation}
\A\subseteq\bigcup_{p=1}^{P}I_{p},\qquad I_{p}=[\omin+(p-1)Z,\omin+pZ].\label{eq:partition}
\end{equation}
Choose now $m\in\N^*$ big enough so that for every $p=1,\dots,P$ there
exists $i_{p}=1,\dots,m$ such that $\omega(p):=\omega_{i_{p}}^{(m)}\in I_{p}$ (recall that  $\omega_{i}^{(m)}=\omin+\frac{(i-1)}{(m-1)}(\omax-\omin)$).
Note that $m$ depends only on $Z$ and $\left|\A\right|$.

Since $\left|[\omega_{\dq}-Z,\omega_{\dq}+Z]\right|=2Z$ and $\left|I_{p}\right|=Z$,
in view of (\ref{eq:partition}) there exists $p_{\dq}=1,\dots,P$
such that $I_{p_{\rho}}\subseteq[\omega_{\dq}-Z,\omega_{\dq}+Z]$. Therefore
$\omega(p_{\dq})\in[\omega_{\dq}-Z,\omega_{\dq}+Z]\cap\A$, whence
by (\ref{eq:2nd bound}) there holds $\left\Vert g_{\dq}(\omega(p_{\dq}))\right\Vert _{H_{\nu}^{1}(\Omega;\C)'}\ge C$.
Since $\omega(p_{\dq})\in\R$ this implies 
\[
\left\Vert \div\left(|u_{\omega(p_{\dq})}|^{2}\nabla\dq+(\overline{u}_{\omega(p_{\dq})}v_{\omega}(\dq)+u_{\omega}\overline{v}_{\omega(p_{\dq})}(\dq))\nabla q\right)\right\Vert _{H_{\nu}^{1}(\Omega;\C)'}\ge C,
\]
which by Lemma~\ref{lem:frechet differentiability} yields $\left\Vert \Delta\dpsi_{\omega(p_{\dq})}(\dq)\right\Vert _{H_{\nu}^{1}(\Omega;\C)'}\ge C$.
Hence, since $\frac{\partial\dpsi_{\omega(p_{\dq})}(\dq)}{\partial\nu}=0$
on $\bo$, there holds $\left\Vert \dpsi_{\omega(p_{\dq})}(\dq)\right\Vert _{H^{1}(\Omega)}\ge C$.
Thus, since $\omega(p_{\dq})\in K^{(m)}$
\[
\sum_{\omega\in K^{(m)}}\left\Vert \dpsi_{\omega}(\dq)\right\Vert _{H^{1}(\Omega)}\ge C.
\]
We have proved this inequality only for $\dq\in H_{\nu}^{1}(\Omega;\R)$
with unitary norm. By using the linearity of $\dpsi_{\omega}(\dq)$
with respect to $\dq$ we immediately obtain
\[
\sum_{\omega\in K^{(m)}}\left\Vert \dpsi_{\omega}(\dq)\right\Vert _{H^{1}(\Omega)}\ge C\left\Vert \dq\right\Vert _{H^{1}(\Omega)},\qquad\dq\in H_{\nu}^{1}(\Omega;\R),
\]
as desired.
\end{proof}
We are now ready to prove Theorem~\ref{thm:main}.
\begin{proof}[Proof of Theorem~\ref{thm:main}]
Inequality (\ref{eq:Landweberinequality}) follows from Proposition~\ref{prop:inequality}.
Moreover, $F_{\omega}$ is Lipschitz continuous by Lemma~\ref{lem:frechet differentiability}.
Therefore, the convergence of the Landweber iteration is a consequence
of the results in \cite{2014arXiv1403.5708A, 1995-hanke-neubauer-scherzer},
provided that $\left\Vert q_{0}-q^{*}\right\Vert _{H^{1}(\Omega)}$
and $h$ are small enough.
\end{proof}

\section{\label{sec:Numerical-Results}Numerical Results}

In this section we present some numerical results. Let $\Omega$ be the unit square $[0,1]\times[0,1]$. We set the mesh size to be $0.01$. A phantom image is used for the true permittivity distribution $q^*$ (see Figure~\ref{fig:sub1}). According to Theorem~\ref{thm:main} we set the Robin boundary condition to be a constant function $\phi=1$. Let $K$ be the set of frequencies for which we have measurements $\psi_\omega^*$, $\omega\in K$. As discussed in $\S$~\ref{sub:landweber}, we minimize the functional $J$ in \eqref{J} with the Landweber iteration scheme given in \eqref{algoLandweber}. The initial guess is $q_0=1$.

\begin{figure}[b]
\includegraphics[width=0.5\linewidth]{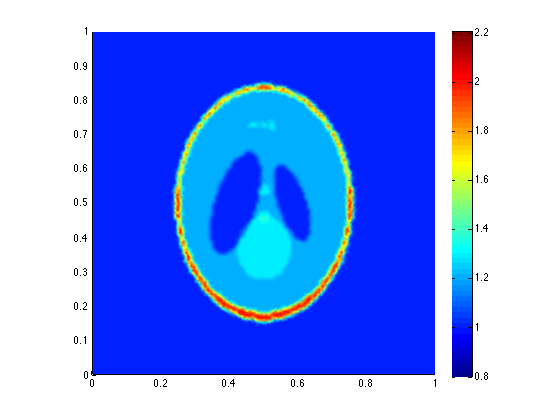}
\caption{The true permittivity distribution $q^*$.}
\label{fig:sub1}
\end{figure}

We start with the  imaging problem at a single frequency. In Figure~\ref{Q2} we display the findings for the case $K=\{3\}$. Figure~\ref{fig:sub2} shows the reconstructed distribution  after 100 iterations. Figure~\ref{erreurq} shows the relative error  as a function of the number of iterations. This suggests the convergence of the iterative algorithm, even though the algorithm is proved to be convergent only in the multi-frequency case. It is possible that for small frequencies $\omega$  (with respect to the domain size) we are still in the coercive case, i.e. the kernel  $R_\omega$ of $\rho\mapsto DF_\omega[q](\rho)$ is trivial and a single frequency is sufficient.

\begin{figure}

\subfloat[Reconstructed distribution after 100 iterations.]{
\includegraphics[width=0.48\linewidth]{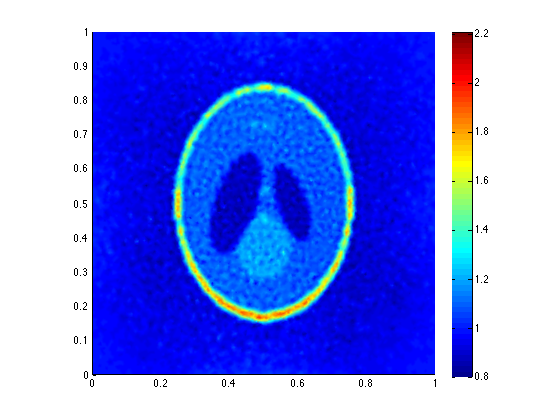}
\label{fig:sub2}
}
\subfloat[Relative error depending on the number of iterations.]{
\includegraphics[width=0.48\linewidth]{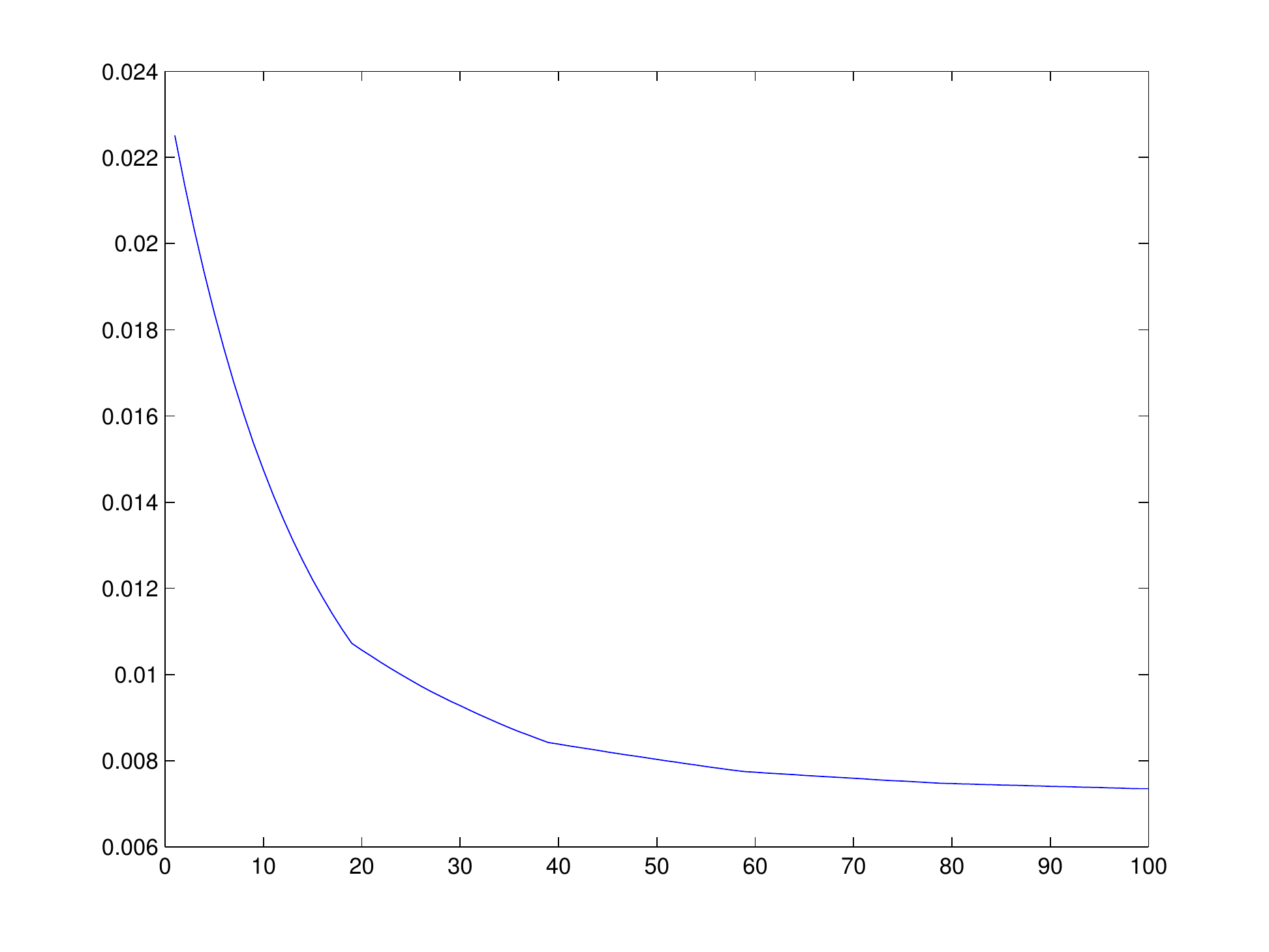}
\label{erreurq}
}
\caption{Reconstruction of $q$ for the set of frequencies $K=\{3\}$.}
\label{Q2} 
\end{figure}

However, this does not work at higher frequencies (with respect to the domain size). Figure~\ref{omegabig} shows some reconstructed maps for $\omega=10$, $\omega=15$ and $\omega=20$, which suggest that the algorithm may not converge numerically for high frequencies. In each case, there are areas that remain invisible.  This may be an indication that $R_\omega\neq\{0\}$  for these values of the frequency. 

\begin{figure}
\subfloat[$K=\{10\}$]{
\includegraphics[width=0.48\linewidth]{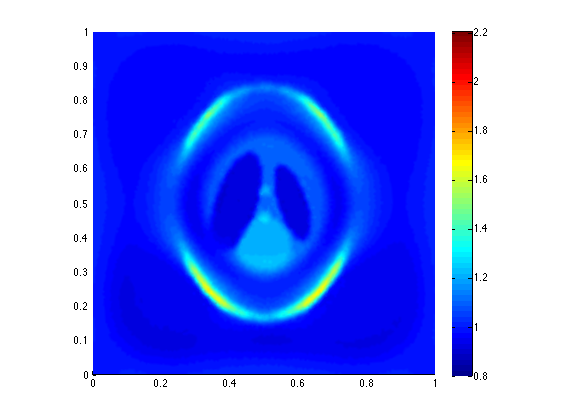}
\label{om10}
}
\subfloat[$K=\{15\}$]{
\includegraphics[width=0.48\linewidth]{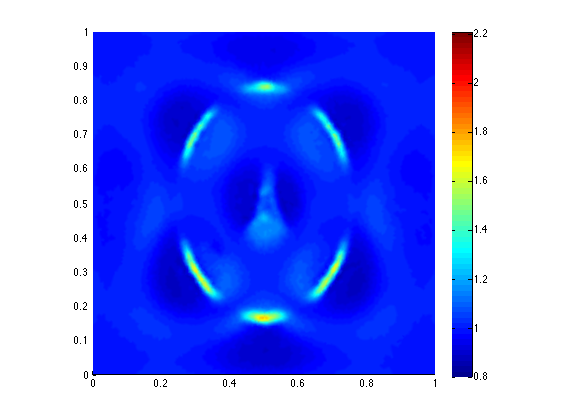}
\label{om15}
}

\subfloat[$K=\{20\}$]{
\includegraphics[width=0.48\linewidth]{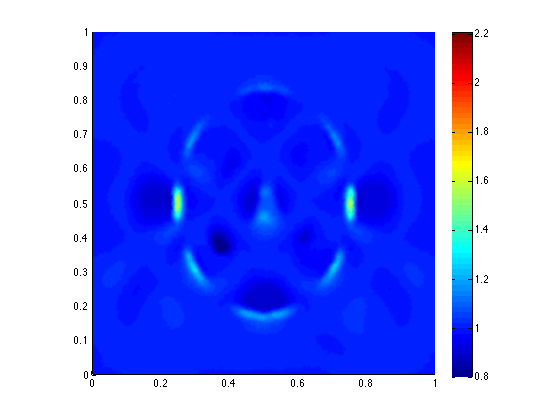}
\label{om20}
}

\caption{Reconstruction of $q$ for  higher frequencies.}
\label{omegabig} 
\end{figure}

The invisible areas in Figure~\ref{omegabig} are different for different frequencies, and so combining these measurements may give a satisfactory reconstruction. More precisely, according to Theorem~\ref{thm:main}, by using multiple frequencies it is possible to make the problem injective, namely $\cap_\omega R_\omega=\{0\}$, since the kernels $R_\omega$ change as $\omega$ varies. Figure~\ref{bigomegas} shows the results for the case $K=\left\{10,15,20\right\}$. (According to the notation introduced in Section~\ref{sec:Acousto-Electromagnetic-Tomograp}, this choice of frequencies corresponds to $\A=[10,20]$ and $m=3$.) These  findings suggest the convergence of the multi-frequency Landweber iteration, even though it was not convergent in each single-frequency case. Since we chose higher frequencies, the convergence is slower.

\begin{figure}
\subfloat[Reconstructed distribution after 200 iterations.]{

\includegraphics[width=0.48\linewidth]{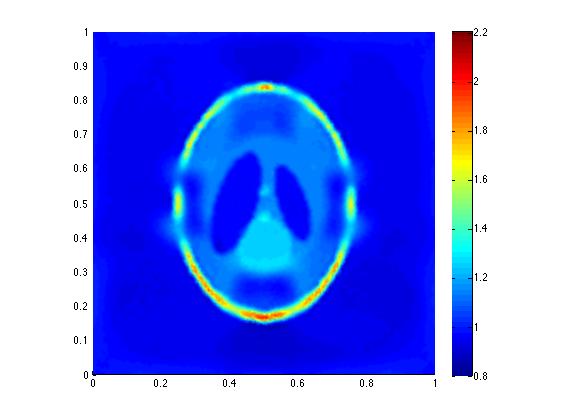}\label{101520}
} \subfloat[Relative error depending on the number of iterations.]{

\includegraphics[width=0.48\linewidth]{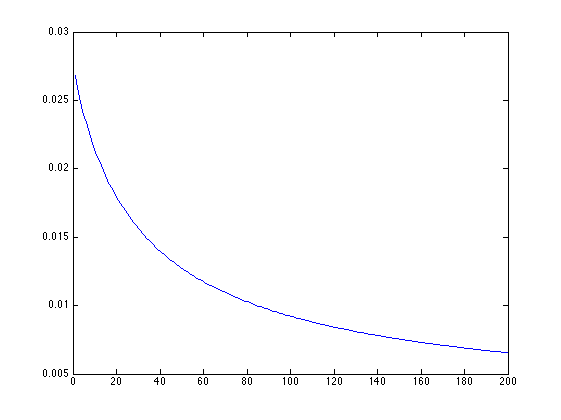}\label{er_b}

}
\caption{Reconstruction of $q$ for $K=\{10,15,20\}$.}
\label{bigomegas} 
\end{figure}

\section{\label{sec:Conclusions}Concluding Remarks}

In this paper, we proved that  the Landweber scheme in acousto-electromagnetic tomography converges to the true solution provided that multi-frequency measurements are used. We illustrated this result with several numerical examples. It would be challenging to estimate the robustness of the proposed algorithm with respect to random fluctuations in the electromagnetic parameters. This will be the subject of a forthcoming work.

\bibliographystyle{plainurl}
\bibliography{2014-alberti-ammari-ruan}

\end{document}